\newcommand{\1}{\mathbf {1}}
\newcommand{\Z}{{\mathbb Z}}
\newcommand{\C}{{\mathbb C}}
\newcommand{\wh}{{\widehat{\mathfrak h}}}
\newcommand{\J}{{\mathcal J}}
\newcommand{\al}{\alpha}
\newcommand{\dl}{\delta}
\newcommand{\gm}{\gamma}
\newcommand{\la}{\langle}
\newcommand{\ra}{\rangle}
\newtheorem{thm}{Theorem}[section]
\newtheorem{prop}[thm]{Proposition}
\newtheorem{lem}[thm]{Lemma}
\newtheorem{rmk}[thm]{Remark}
\newtheorem{definition}[thm]{Definition}
\begin{document}

\begin{center}
{\Large \bf   Quantum dimensions and fusion rules for parafermion vertex operator algebras}

\end{center}

\begin{center}
{ Chongying Dong$^{a}$\footnote{Supported by NSF grant DMS-1404741, NSA grant H98230-14-1-0118 and China NSF grant 11371261.}
and Qing Wang$^{b}$\footnote{Supported by
 China NSF grant (No.11371024), Natural Science Foundation of Fujian Province
(No.2013J01018) and Fundamental Research Funds for the Central
University (No.2013121001).}\\
$\mbox{}^{a}$ Department of Mathematics, University of
California, Santa Cruz, CA 95064\\
\vspace{.1cm}
$\mbox{}^{b}$School of Mathematical Sciences, Xiamen University,
Xiamen 361005, China\\
}
\end{center}

\begin{abstract}
The quantum dimensions  and the fusion rules for the parafermion vertex operator algebra associated to the
irreducible highest weight module for the affine Kac-Moody algebra
$A_1^{(1)}$ of level $k$ are determined.

\end{abstract}

\section{Introduction}
\def\theequation{1.\arabic{equation}}
\setcounter{equation}{0}
Parafermion vertex operator algebra is the commutant of the Heisenberg vertex operator algebra in the affine vertex operator algebra. It comes from a special kind of coset construction \cite{GKO}. Precisely speaking, let $L_{\widehat{\mathfrak{g}}}(k,0)$ be the level $k$
integrable highest weight module with weight zero for affine
Kac-Moody algebra $\widehat{\mathfrak{g}}$ associated to a finite
dimensional simple Lie algebra $\mathfrak{g}.$ Then $L_{\widehat{\mathfrak{g}}}(k,0)$
contains the Heisenberg vertex operator subalgebra
generated by the Cartan subalgebra $\mathfrak{h}$ of $\mathfrak{g}.$ The commutant
$K(\mathfrak{g},k)$ of the Heisenberg vertex operator subalgebra in $L_{\widehat{\mathfrak{g}}}(k,0)$ is the parafermion vertex operator
algebra. The structure and representation theory of parafermion vertex operator algebras has been widely studied these years(see \cite{DLY2}, \cite{DLWY}, \cite{DW1}, \cite{DW2}, \cite{ALY}). In particular, \cite{DW1} and \cite{DW2} show the role of parafermion vertex operator algebra $K(sl_2,k)$ in the parafermion vertex algebra $K(\mathfrak{g},k)$ is similar to the role of $3$-dimensional simple Lie algebra $sl_2$  played in Kac-Moody Lie algebras, we denote $K(sl_2,k)$ by $K_0$ in this paper. Moreover, it was proved that $K_0$ coincides with a certain $W$-algebra in \cite{DLY2} and \cite{DLWY}. Later in \cite{ALY}, the $C_2$-cofiniteness of parafermion vertex operator algebra $K(\mathfrak{g},k)$ has been established by proving the $C_2$-cofiniteness of parafermion vertex operator algebra $K_0$, and irreducible modules for parafermion vertex operator algebra $K_0$ were also classified therein. In the recent paper \cite{DR}, the rationality of parafermion vertex operator algebra $K(\mathfrak{g},k)$ was also obtained, moreover, the irreducible modules for $K(\mathfrak{g},k)$ were classified.

The notion of quantum dimensions of modules for vertex operator algebras was introduced in \cite{DJX}. It was proved therein for rational and $C_2$-cofinite vertex operator algebras, quantum dimensions do exist. In this paper, we first determine the quantum dimensions for the parafermion vertex operator algebra $K_0$. Then by using the important formula obtained in \cite{DJX} which shows that quantum dimensions are multiplicative under tensor product, we give the fusion rules for the parafermion vertex operator algebra $K_0$.

The paper is organized as follows. In Section 2, we recall some results about parafermion vertex operator algebra $K_0$. In Section 3, after reviewing the notion and properties of quantum dimensions of modules for vertex operator algebras, we give the quantum dimensions of parafermion vertex operator algebra $K_0$. In the final section, we obtained the fusion rules of parafermion vertex operator algebra $K_0$ by using the results of quantum dimensions of parafermion vertex operator algebra $K_0$.

\section{Preliminary}
\label{Sect:V(k,0)}

In this section, we recall from \cite{DLY2}, \cite{DLWY} and \cite{ALY} some basic results on the parafermion vertex
operator algebra associated to the
irreducible highest weight module for the affine Kac-Moody algebra
$A_1^{(1)}$ of level $k$ with $k\geq 2$ being an integer. First we recall the notion of the parafermion vertex operator algebra.

We are working in the setting of \cite{DLY2}. Let $\{ h, e, f\}$
be a standard Chevalley basis of $sl_2$ with brackets $[h,e] = 2e$, $[h,f] = -2f$,
$[e,f] = h$. Let $\widehat{sl}_2 = sl_2 \otimes \C[t,t^{-1}]
\oplus \C C$ be the affine Lie algebra associated to $sl_2$. Let $k \ge 2$
be an integer and
\begin{equation*}
V(k,0) = V_{\widehat{sl}_2}(k,0) = \mbox{Ind}_{sl_2 \otimes \C[t]\oplus \C
C}^{\widehat{sl}_2}\C
\end{equation*}
be an induced $\widehat{sl}_2$-module such that $sl_2 \otimes \C[t]$ acts
as $0$ and $C$ acts as $k$ on $\mathbf{1}=1$. We denote by $a(n)$ the operator on $V(k,0)$ corresponding to the action of
$a \otimes t^n$. Then
\begin{equation}\label{eq:affine-commutation}
[a(m), b(n)] = [a,b](m+n) + m \la a,b \ra \delta_{m+n,0}k
\end{equation}
for $a, b \in sl_2$ and $m,n\in \Z$. It is well known that there is a vertex
operator algebra structure on $V(k,0)$ and
it has a
unique maximal ideal $\J$, which is generated by a weight $k+1$ vector
$e(-1)^{k+1}\1$ \cite{Kac}. The quotient algebra $L(k,0) = V(k,0)/\J$ is the
simple vertex operator algebra associated to an affine Lie algebra
$\widehat{sl}_2$ of type $A_1^{(1)}$ with level $k$. The subspace $V_{\wh}(k,0)$ of $V(k,0)$ spanned by $h(-i_1) \cdots
h(-i_p)\1$ for $i_1 \ge \cdots \ge i_p \ge 1$ and $p \ge 0$ is a vertex
operator subalgebra of $V(k,0)$ associated to the Heisenberg algebra.
The parafermion vertex operator algebra $K_0$ is defined as the commutant of
$V_{\wh}(k,0)$ in $L(k,0)$, that is,
\begin{equation*}
K_0 = \{v \in L(k,0)\,|\, h(m)v = 0 \text{ for } m
\ge 0\}.
\end{equation*}
%Similarly, $\J$ is completely reducible as a $V_{\wh}(k,0)$-module. Hence by
%\eqref{eq:dec-Heisenberg},
%\begin{equation*}
%\J = \oplus_\lambda M_{\wh}(k,\lambda) \otimes (\J \cap N_\lambda).
%\end{equation*}
%In particular, $\tI = \J \cap N_0$ is an ideal of $N_0$ and $K_0 \cong
%N_0/\tI$. It is proved in \cite[Lemma 3.1]{DLY2} and \cite[Theorem 4.2]{DLWY} that $\tI$ is the unique
%maximal ideal of $N_0$ generated by a weight
%$k+1$ vector $f(0)^{k+1}e(-1)^{k+1}\1$.
It was proved that $K_0$ is a simple vertex operator algebra
%It was proved that
%$K_0$ is $C_2$-cofinite in \cite{ALY}.
 and the irreducible $K_0$-modules $M^{i,j}$ for $0\leq i\leq k, 0\leq j\leq k-1$ were constructed in \cite{DLY2}. Note that $K_0=M^{0,0}$. It was also proved that $M^{i,j}\cong M^{k-i,k-i+j}$ as $K_0$-module in
\cite[Theorem 4.4]{DLY2} and moreover, Theorem 8.2 in \cite{ALY} showed that the $\frac{k(k+1)}{2}$ irreducible $K_0$-modules $M^{i,j}$ for $0\leq i\leq k, 0\leq j\leq i-1$ constructed in \cite{DLY2} form a complete set of isomorphism classes of irreducible $K_0$-module. Moreover, $K_0$ is $C_2$-cofinite \cite{ALY} and rational \cite{DR}.

Recall from \cite{DLY2} that $L = \Z\al_1 + \cdots + \Z\al_k$ with $\la \al_p,\al_q\ra = 2\dl_{pq}$. $V_L = M(1)
\otimes \C[L]$ is the lattice vertex operator algebra associated with the lattice $L$.  Let $\gamma = \al_1 + \cdots +
\al_k$. Thus $\la \gamma,\gamma \ra = 2k$. It is well known that the vertex operator algebra associated with a
positive definite even lattice is rational \cite{Dong}. And any irreducible modules for the lattice vertex operator algebra $V_{\Z\gm}$ is isomorphic
to one of $V_{\Z\gm + n\gm/2k}$, $0 \le n \le 2k-1$ \cite{Dong}. Let $L(k,i)$ for $0\leq i\leq k$ be the irreducible modules for the rational vertex operator algebra $L(k,0)$. The following result was due to \cite{DLY2}.

\begin{lem}\label{lem:dec}
$L(k,i) = \oplus_{j=0}^{k-1} V_{\Z\gm + (i-2j)\gm/2k} \otimes M^{i,j}$ as
$V_{\Z\gm}$-modules.
\end{lem}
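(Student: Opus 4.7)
The plan is to obtain the decomposition in two stages: first decompose $L(k,i)$ under the Heisenberg subalgebra $V_{\wh}(k,0)$, then reassemble the Heisenberg summands into $V_{\Z\gm}$-modules using ``$Z$-algebra'' (parafermion) operators built from $e(-1)^k\mathbf 1$ and $f(-1)^k\mathbf 1$ inside $L(k,0)$.

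First I would use that $V_{\wh}(k,0)$ is a rank-one Heisenberg vertex operator algebra with generator $h$ satisfying $[h(m),h(n)] = 2km\,\delta_{m+n,0}$. Since $L(k,i)$ has finite-dimensional conformal weight spaces and is bounded below, standard Heisenberg theory gives a complete decomposition
\begin{equation*}
L(k,i) \;=\; \bigoplus_{s\in i+2\Z} M(1,s)\otimes \Omega_s,\qquad \Omega_s=\{v\in L(k,i)\mid h(0)v=sv,\ h(n)v=0\ \forall n>0\},
\end{equation*}
where each multiplicity space $\Omega_s$ is naturally a $K_0$-module. Matching the Heisenberg of $V_{\wh}(k,0)$ with that of $V_{\Z\gm}$ via $h(n)\leftrightarrow \gm(n)$ (consistent because $\la\gm,\gm\ra=2k$ yields the same bracket), I observe
\begin{equation*}
V_{\Z\gm+(i-2j)\gm/2k} \;=\; \bigoplus_{n\in\Z} M\bigl(1,\,i-2j+2kn\bigr)
\end{equation*}
as Heisenberg modules. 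So once I identify, for each residue class $i-2j$ mod $2k$, all of the $\Omega_{i-2j+2kn}$ ($n\in\Z$) with a single $K_0$-module $M^{i,j}$, regrouping the sum above according to $s\bmod 2k$ yields the claimed decomposition.

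The main obstacle is constructing the $V_{\Z\gm}$-action that is not visible from the Heisenberg alone, or equivalently providing the isomorphisms $\Omega_s\cong\Omega_{s\pm 2k}$. Here I would exploit the vectors $e(-1)^k\mathbf 1,\,f(-1)^k\mathbf 1\in L(k,0)$: a short computation with the affine commutation relations shows each is annihilated by $h(n)$ for $n\ge1$, has $h(0)$-weight $\pm 2k$, and has conformal weight $k$, exactly matching $e^{\pm\gm}\in V_{\Z\gm}$. Factoring the associated vertex operators into their Heisenberg part and a parafermion field, in the schematic form
\begin{equation*}
Y\!\bigl(e(-1)^k\mathbf 1,z\bigr) \;=\; \mathrm{E}^-(-\gm,z)\,\mathrm{E}^+(-\gm,z)\,e^{\gm}z^{\gm(0)}\cdot Z^+(z),
\end{equation*}
and similarly for $f(-1)^k\mathbf 1$ with a field $Z^-(z)$, the modes of $Z^\pm(z)$ commute with the Heisenberg and shift $h(0)$-weight by $\mp 2k$, so they produce maps $\Omega_s\rightleftarrows \Omega_{s\pm 2k}$.

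Showing these maps are inverse to each other up to a nonzero scalar is the delicate step: it uses the $\widehat{sl}_2$ relation $[e,f]=h$ (which in $L(k,0)$ forces non-trivial composition identities between the $Z^\pm$-modes) together with the simplicity of $L(k,i)$ to rule out kernels. Once this identification is in place, setting $M^{i,j}:=\Omega_{i-2j}$ recovers the $K_0$-modules constructed in \cite{DLY2}, and reorganizing the Heisenberg decomposition along the $k$ residues $j=0,\dots,k-1$ yields
\begin{equation*}
L(k,i)\;=\;\bigoplus_{j=0}^{k-1}\Bigl(\bigoplus_{n\in\Z}M(1,i-2j+2kn)\Bigr)\otimes M^{i,j}\;=\;\bigoplus_{j=0}^{k-1}V_{\Z\gm+(i-2j)\gm/2k}\otimes M^{i,j},
\end{equation*}
with the $V_{\Z\gm}$-module structure on the left given by combining the Heisenberg action with the vertex operators for $e^{\pm\gm}$ implemented by $e(-1)^k\mathbf 1$ and $f(-1)^k\mathbf 1$.
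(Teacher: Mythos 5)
Your first stage is sound: the Heisenberg commutation relation $[h(m),h(n)]=2km\,\delta_{m+n,0}$ is right, the decomposition $L(k,i)=\oplus_{s\in i+2\Z}M(1,s)\otimes\Omega_s$ with $K_0$ acting on the multiplicity spaces is standard, and the bookkeeping matching $V_{\Z\gm+(i-2j)\gm/2k}$ with $\oplus_{n\in\Z}M(1,i-2j+2kn)$ as Heisenberg modules (using $\la\gm,\gm\ra=2k$, and $e(-1)^k\1$, $f(-1)^k\1$ having $h(0)$-weight $\pm 2k$ and conformal weight $k$) is correct. The genuine gap is in your second stage. First, the factorization $Y(e(-1)^k\1,z)=E^-(-\gm,z)E^+(-\gm,z)e^{\gm}z^{\gm(0)}Z^+(z)$ is circular as written: the translation operator $e^{\gm}$ and the factor $z^{\gm(0)}$ presuppose exactly the lattice-module structure on $L(k,i)$ that you are trying to construct (the standard fix is to define $Z^{\pm}(z)$ by conjugating the vertex operator with the Heisenberg exponentials $E^{\pm}$ alone). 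More seriously, the claim that the resulting mode maps $\Omega_s\rightleftarrows\Omega_{s\pm 2k}$ are ``inverse to each other up to a nonzero scalar'' is not something a short computation with $[e,f]=h$ delivers: individual modes of $Z^+$ and $Z^-$ do not compose to scalars, they satisfy the \emph{generalized} commutation relations of Lepowsky--Wilson $Z$-algebra theory, and extracting $\Omega_s\cong\Omega_{s\pm 2k}$ from those relations (via irreducibility of the vacuum space of an irreducible module) is a substantial theorem in that theory, not a step that simplicity of $L(k,i)$ lets you wave through. As sketched, this key step would not go through.

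Note also that the paper gives no proof at all --- the lemma is quoted from \cite{DLY2} --- but the machinery recalled in Section 2 immediately before the statement shows the intended argument, which bypasses your obstacle entirely. Inside the explicit embedding $L(k,0)\subset V_L=V_{\Z\al_1}\otimes\cdots\otimes V_{\Z\al_k}$, one computes that $e(-1)^k\1$ and $f(-1)^k\1$ are nonzero multiples of $e^{\gm}$ and $e^{-\gm}$, so $V_{\Z\gm}$ is literally a vertex operator subalgebra of $L(k,0)$ and $L(k,i)$ is a $V_{\Z\gm}$-module by restriction --- no locality check or inverse-mode construction is needed. Then rationality of $V_{\Z\gm}$ and the classification of its irreducibles as $V_{\Z\gm+n\gm/2k}$, $0\le n\le 2k-1$ \cite{Dong}, give complete reducibility at once; your $h(0)$-eigenvalue bookkeeping (eigenvalues lie in $i+2\Z$, so only the cosets with $n\equiv i\bmod 2$ occur, i.e.\ $n\equiv i-2j$ for $j=0,\dots,k-1$) selects the summands, and the multiplicity spaces are by definition the modules $M^{i,j}$ of \cite{DLY2}. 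In particular the isomorphisms $\Omega_s\cong\Omega_{s+2k}$ that you labor to build by hand fall out as a corollary of complete reducibility. To salvage your route you would have to either import the full $Z$-algebra machinery, or prove directly that the vertex subalgebra of $L(k,0)$ generated by $h$, $e(-1)^k\1$, $f(-1)^k\1$ is isomorphic to $V_{\Z\gm}$ --- which is precisely what the lattice embedding provides for free.
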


\section{Quantum dimensions for irreducible $K_0$-modules
}\label{Sect:maximal-ideal-tI}
In this section, we first recall the notion and some basic facts about quantum dimension from \cite{DJX}. Then we determine the quantum dimensions of the irreducible $K_0$-modules.

Let $\left(V,Y,1,\omega\right)$ be a vertex operator algebra (see
\cite{FLM}, \cite{LL}). We define weak module, module and admissible module following \cite{DLM1, DLM2}. Let $W\left\{ z\right\} $
denote the space of $W$-valued formal series in arbitrary complex
powers of $z$ for a vector space $W$.

\begin{definition}A \emph{weak $V$-module} $M$ is
a vector space with a linear map
\[
Y_{M}:V\to\left(\text{End}M\right)\{z\}
\]

\[
v\mapsto Y_{M}\left(v,z\right)=\sum_{n\in\mathbb{Z}}v_{n}z^{-n-1}\ \left(v_{n}\in\mbox{End}M\right)
\]
which satisfies the following conditions for $u,v\in V, w\in M$:

\[
u_{n}w=0\ {\rm for} \ n\gg0,
\]

\[
Y_{M}\left(\mathbf{1},z\right)=Id_{M},
\]

\[
z_{0}^{-1}\text{\ensuremath{\delta}}\left(\frac{z_{1}-z_{2}}{z_{0}}\right)Y_{M}\left(u,z_{1}\right)
Y_{M}\left(v,z_{2}\right)-z_{0}^{-1}\delta\left(\frac{z_{2}-z_{1}}{-z_{0}}\right)Y_{M}\left(v,z_{2}\right)Y_{M}\left(u,z_{1}\right)
\]

\[
=\delta\left(\frac{z_{1}-z_{0}}{z_{2}}\right)Y_{M}\left(Y\left(u,z_{0}\right)v,z_{2}\right),
\]
 where $\delta\left(z\right)=\sum_{n\in\mathbb{Z}}z^{n}$. \end{definition}

\begin{definition}

A \emph{$V$-module} is a weak $V$-module\emph{
}$M$ which carries a $\mathbb{C}$-grading $M=\bigoplus_{\lambda\in\mathbb{C}}M_{\lambda},$
where $M_{\lambda}=\{w\in M|L(0)w=\lambda w\}$ and $L(0)$ is one of the coefficient operators of $Y(\omega,z)=\sum_{n\in\mathbb{Z}}L(n)z^{-n-2}.$
Moreover we require
that $\dim M_{\lambda}$ is finite and for fixed $\lambda,$ $M_{\lambda+n}=0$
for all small enough integers $n.$

\end{definition}

\begin{definition}An \emph{admissible $V$-module} $M=\oplus_{n\in\mathbb{Z}_{+}}M\left(n\right)$
is a $\mathbb{Z}_{+}$-graded weak module
such that $u_{m}M\left(n\right)\subset M\left(\mbox{wt}u-m-1+n\right)$
for homogeneous $u\in V$ and $m,n\in\mathbb{Z}.$ $ $

\end{definition}

%If $g=Id_{V}$ we have the notions of weak, ordinary and admissible
%$V$-modules \cite{DLM3}.

\begin{definition}A vertex operator algebra $V$ is called \emph{rational}
if the admissible module category is semisimple. \end{definition}

The following lemma about rational vertex operator algebras is
well known \cite{DLM3}.

\begin{lem} If $V$ is rational and $M$ is an irreducible
admissible $V$-module, then

(1) $M$ is a $V$-module and there exists a $\lambda\in\mathbb{C}$
such that $M=\oplus_{n\in\mathbb{Z_{+}}}M_{\lambda+n}$
where $M_{\lambda}\neq0.$ And $\lambda$ is called the conformal weight
of $M;$

(2) There are only finitely many irreducible admissible
$V$-modules up to isomorphism. \end{lem}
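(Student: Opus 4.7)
The plan is to pass from admissible $V$-modules to modules over Zhu's associative algebra $A(V)$, which is the standard device in \cite{DLM3}. For any admissible $V$-module $M = \oplus_{n \in \Z_+} M(n)$ with $M(0) \neq 0$, the zero-mode map $a \mapsto o(a) := a_{\mathrm{wt}(a)-1}$ (for homogeneous $a$) descends to an algebra homomorphism $A(V) \to \mathrm{End}(M(0))$, making $M(0)$ into an $A(V)$-module that is simple whenever $M$ is irreducible admissible. This correspondence $M \mapsto M(0)$ admits an inverse via a generalized Verma-type induction, which I would only invoke to get the bijection used for (2).

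Next I would use the rationality input. Under rationality, one knows that $A(V)$ is a finite-dimensional semisimple associative algebra; this is the key structural consequence that makes the Zhu correspondence genuinely useful here. In particular $M(0)$ is finite-dimensional over $\C$. The Virasoro zero mode $L(0) = o(\omega)$ preserves $M(0)$ and commutes with every $o(a)$ on $M(0)$ (this follows from the $[L(0), Y_M(a,z)]$ bracket together with the admissible grading), so $L(0)$ is an $A(V)$-module endomorphism of $M(0)$. By Schur's lemma applied to the simple finite-dimensional $A(V)$-module $M(0)$, $L(0)$ acts on $M(0)$ by a single scalar $\lambda$, which is the conformal weight.

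To extend this to all of $M$, I would use that $M$ is generated over $V$ from $M(0)$ by modes $a_n$ which shift both the admissible grading and the $L(0)$-eigenvalue by the same integer $\mathrm{wt}(a) - n - 1$. Inducting on the admissible grading, one shows $M(n) \subseteq M_{\lambda + n}$; combined with the $\C$-grading axiom in the definition of a module this yields $M = \oplus_{n \in \Z_+} M_{\lambda + n}$ with $M_{\lambda} \neq 0$. Finite dimensionality of each $M_{\lambda + n}$ follows from finite dimensionality of $M(0)$ and the PBW-type spanning of the induced construction. Part (2) is then immediate: because $A(V)$ is finite-dimensional semisimple, it has only finitely many isomorphism classes of simple modules, and the Zhu correspondence transports this finiteness to irreducible admissible $V$-modules.

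The main obstacle I expect is the foundational step that rationality forces $A(V)$ to be semisimple rather than merely finite-dimensional; this is not tautological and requires constructing an inverse functor from $A(V)$-modules to admissible $V$-modules and then applying rationality to the constructed modules to conclude that $A(V)$ itself decomposes, as carried out in \cite{DLM3}. Once that is available, the Schur-lemma argument for the conformal weight and the grading compatibility argument for the higher levels are essentially formal.
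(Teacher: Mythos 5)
The paper itself gives no proof of this lemma: it is quoted as ``well known'' with a citation to \cite{DLM3}, so the only meaningful comparison is with the argument of that reference. Your outline is essentially that standard argument --- pass to Zhu's algebra $A(V)$ via $a\mapsto o(a)=a_{\mathrm{wt}\,a-1}$, use rationality together with the generalized Verma construction to show $A(V)$ is finite-dimensional semisimple, obtain the conformal weight from the centrality of $\omega+O(V)$ and Schur's lemma, propagate the grading by $[L(0),a_m]=(\mathrm{wt}\,a-m-1)a_m$, and deduce finiteness of the irreducibles from the semisimplicity of $A(V)$ --- and you correctly identify semisimplicity of $A(V)$ as the genuinely nontrivial input.

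One step, however, would fail as you justify it: the finite-dimensionality of the homogeneous pieces $M_{\lambda+n}$ does \emph{not} follow from ``finite dimensionality of $M(0)$ and the PBW-type spanning of the induced construction.'' In the generalized Verma module $\bar{M}(U)$ the degree-$n$ piece is spanned by vectors $a^{1}_{-m_1}\cdots a^{r}_{-m_r}u$, and for a fixed positive degree shift $d=\mathrm{wt}\,a-m-1$ the vectors $a$ range over homogeneous elements of $V$ of \emph{arbitrary} weight; hence each graded piece of $\bar{M}(U)$ is a quotient of an infinite-dimensional space and is in general infinite-dimensional, even when $U$ is finite-dimensional. Finite-dimensionality of $M(n)$ in the irreducible quotient is therefore not formal --- it is part of the content of part (1) of the lemma, and in \cite{DLM3} it is established by invoking rationality again (complete reducibility applied to suitable auxiliary admissible modules), or in later treatments via the algebras $A_n(V)$, whose finite-dimensionality controls $M(n)$ the way $A(V)$ controls $M(0)$. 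You should replace the PBW-counting sentence by an appeal to that result; the remainder of your outline (simplicity of $M(0)$ over $A(V)$, the scalar action of $L(0)$, the inclusion $M(n)\subseteq M_{\lambda+n}$, and finiteness of simple modules over a finite-dimensional semisimple algebra) is sound.
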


\begin{definition} We say that a vertex operator algebra $V$ is
\emph{$C_{2}$-cofinite} if $V/C_{2}(V)$ is finite dimensional, where
$C_{2}(V)=\langle v_{-2}u|v,u\in V\rangle.$ \end{definition}

%\begin{rmk} It was prove in \cite{ALY} that the parafermion vertex operator algebra $K_0$ is $C_{2}$-cofinite.
%\end{rmk}

Let $M=\bigoplus_{\lambda\in \C}M_{\lambda}$ be a $V$-module. Set $M'=\bigoplus_{\lambda\in \C}M_{\lambda}^{*}$, the restricted dual of $M$£¬ where $M_{\lambda}^{*}=\mbox{Hom}_{\mathbb{C}}(M_{\lambda},\mathbb{C}).$
It was proved in \cite{FHL} that $M^{'}$ is naturally a $V$-module where the vertex
operator $Y_{M'}(v,z)$ is defined for $v\in V$ via
\begin{eqnarray*}
\langle Y_{M'}(v,z)f,u\rangle=\langle f,Y_{M}(e^{zL(1)}(-z^{-2})^{L(0)}v,z^{-1})u\rangle,
\end{eqnarray*}
where $\langle f,w\rangle=f(w)$ is the natural paring $M'\times M\to\mathbb{C}.$ The $V$-module $M^{'}$ is called the \emph{contragredient
module} of $M$. A $V$-module $M$ is called \emph{self-dual} if $M$ and $M^{'}$ are isomorphic $V$-modules. The following result was proved in \cite{L3}.

%\begin{definition} Let $M=\bigoplus_{n\in\frac{1}{T}\mathbb{Z}_{+}}M(n)$
%be an admissible $g$-twisted $V$-module, the\emph{ contragredient
%module }$M'$ is defined as follows:
%\[
%M'=\bigoplus_{n\in\frac{1}{T}\mathbb{Z}_{+}}M(n)^{*},
%\]
%where $M(n)^{*}=\mbox{Hom}_{\mathbb{C}}(M(n),\mathbb{C}).$ The vertex
%operator $Y_{M'}(v,z)$ is defined for $v\in V$ via
%\begin{eqnarray*}
%\langle Y_{M'}(v,z)f,u\rangle=\langle f,Y_{M}(e^{zL(1)}(-z^{-2})^{L(0)}v,z^{-1})u\rangle,
%\end{eqnarray*}
%where $\langle f,w\rangle=f(w)$ is the natural paring $M'\times M\to\mathbb{C}.$
%\end{definition}

\begin{lem}\label{selfdual} Let $V$ be a simple vertex operator algebra such that $L(1)V_{(1)}\neq V_{(0)}$. Then $V$ is self-dual.
\end{lem}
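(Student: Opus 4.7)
The plan is to invoke Li's classification of invariant bilinear forms on a vertex operator algebra, which identifies such forms with the dual of the quotient $V_{(0)}/L(1)V_{(1)}$, and then use the simplicity of $V$ to promote a nonzero such form to a nondegenerate one, which in turn yields the desired isomorphism $V \cong V'$.

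More concretely, I would proceed as follows. First, I would recall (or verify) the fact that the space of invariant bilinear forms on $V$, that is, bilinear forms $(\cdot,\cdot)$ on $V$ satisfying
\[
(Y(a,z)u,v) = (u, Y(e^{zL(1)}(-z^{-2})^{L(0)}a, z^{-1})v)
\]
for all $a,u,v \in V$, is linearly isomorphic to the dual space $\bigl(V_{(0)}/L(1)V_{(1)}\bigr)^{*}$ via the map sending a form to its restriction to $V_{(0)}$. Since $V$ is simple, $V_{(0)} = \mathbb{C}\mathbf{1}$ is one-dimensional; together with $L(1)V_{(1)} \subseteq V_{(0)}$, the hypothesis $L(1)V_{(1)} \neq V_{(0)}$ forces $L(1)V_{(1)} = 0$, so this dual space is one-dimensional and in particular nonzero. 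Fix a nonzero invariant bilinear form $(\cdot,\cdot)$ on $V$.

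Next I would show this form is nondegenerate. The radical
\[
R = \{u \in V \mid (u,v) = 0 \text{ for all } v \in V\}
\]
is, by the invariance identity applied with arbitrary $a \in V$, an ideal of $V$ (one checks that $Y(a,z)R \subseteq R((z))$ using the invariance property together with the invertibility of $e^{zL(1)}(-z^{-2})^{L(0)}$ acting on the inserted vector). Because $V$ is simple, $R$ is either $V$ or $0$, and since the form is nonzero, $R = 0$. Finally, a nondegenerate invariant bilinear form on $V$ gives rise to an injective $V$-module homomorphism $V \to V'$ defined by $u \mapsto (u,\cdot)$; that this is a module map is precisely the content of the invariance identity, translated into the contragredient vertex operator $Y_{V'}$. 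Since $V$ and $V'$ have the same graded dimensions (the grading on $V'$ being inherited from that of $V$), this injection is an isomorphism, and $V$ is self-dual.

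The main obstacle is step one, the identification of invariant bilinear forms with $(V_{(0)}/L(1)V_{(1)})^{*}$; this is the technical heart of Li's theorem and is what produces an invariant form from mere linear data. Once the form exists, the argument promoting it to an isomorphism is a standard simplicity-plus-graded-dimension exercise. I would therefore simply cite \cite{L3} for step one rather than reproducing Li's construction, and give the short simplicity argument in situ.
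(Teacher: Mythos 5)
Your proposal is correct and takes essentially the same route as the paper, which offers no proof of its own but simply cites Li \cite{L3}, whose argument is precisely what you reconstruct: invariant bilinear forms correspond to $\bigl(V_{(0)}/L(1)V_{(1)}\bigr)^{*}$, simplicity forces the radical of a nonzero form to vanish, and nondegeneracy together with equal graded dimensions yields $V\cong V'$. One inessential over-claim: you do not need (and simplicity alone does not give, absent a CFT-type hypothesis) $V_{(0)}=\mathbb{C}\mathbf{1}$; since $L(1)V_{(1)}\subseteq V_{(0)}$ always holds, the hypothesis $L(1)V_{(1)}\neq V_{(0)}$ already makes the quotient nonzero, which is all your argument uses.
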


\begin{rmk}\label{K0selfdual} Note that the weight one subspace of $K_0$ is zero. By using lemma \ref{selfdual}, parafermion vertex operator algebra $K_{0}$ is obviously self-dual.
\end{rmk}

%\begin{rmk} 1.  $(M',Y_{M'})$ is an admissible $g^{-1}$-twisted
%$V$-module \cite{FHL}.

%2. We can also define the contragredient module $M'$ for a $g$-twisted
%$V$-module $M.$ In this case, $M'$ is a $g^{-1}$-twisted $V$-module.
%Moreover, $M$ is irreducible if and only if $M'$ is irreducible.

%\end{rmk}

Now we  recall from \cite{FHL} the notions of intertwining operators and fusion rules.

\begin{definition} Let $(V,\ Y)$ be a vertex operator algebra and
let $(W^{1},\ Y^{1}),\ (W^{2},\ Y^{2})$ and $(W^{3},\ Y^{3})$ be
$V$-modules. An \emph{intertwining operator} of type $\left(\begin{array}{c}
W^{3}\\
W^{1\ }W^{2}
\end{array}\right)$ is a linear map
\[
I(\cdot,\ z):\ W^{1}\to\text{\ensuremath{\mbox{Hom}(W^{2},\ W^{3})\{z\}}}
\]

\[
u\to I(u,\ z)=\sum_{n\in\mathbb{Q}}u_{n}z^{-n-1}
\]
 satisfying:

(1) for any $u\in W^{1}$ and $v\in W^{2}$, $u_{n}v=0$ for $n$
sufficiently large;

(2) $I(L(-1)v,\ z)=\frac{d}{dz}I(v,\ z)$;

(3) (Jacobi identity) for any $u\in V,\ v\in W^{1}$

\[
z_{0}^{-1}\delta\left(\frac{z_{1}-z_{2}}{z_{0}}\right)Y^{3}(u,\ z_{1})I(v,\ z_{2})-z_{0}^{-1}\delta\left(\frac{-z_{2}+z_{1}}{z_{0}}\right)I(v,\ z_{2})Y^{2}(u,\ z_{1})
\]
\[
=z_{2}^{-1}\left(\frac{z_{1}-z_{0}}{z_{2}}\right)I(Y^{1}(u,\ z_{0})v,\ z_{2}).
\]

The space of all intertwining operators of type $\left(\begin{array}{c}
W^{3}\\
W^{1}\ W^{2}
\end{array}\right)$ is denoted by
$$I_{V}\left(\begin{array}{c}
W^{3}\\
W^{1}\ W^{2}
\end{array}\right).$$ Let $N_{W^{1},\ W^{2}}^{W^{3}}=\dim I_{V}\left(\begin{array}{c}
W^{3}\\
W^{1}\ W^{2}
\end{array}\right)$. These integers $N_{W^{1},\ W^{2}}^{W^{3}}$ are usually called the
\emph{fusion rules}. \end{definition}

%\begin{remark}\label{Intertwining expression} \cite{FZ} Let $M^{i}=\oplus_{n\in\mathbb{Z}}M^{i}(n),$
%$i=1,2,3$ be irreducible modules for a vertex operator algebra $V,$
%and the corresponding conformal weights are $h_{i}$, $i=1,2,3$.
%If $I(\cdot,z)$ is an intertwining operator of type $\left(\begin{array}{c}
%M^{3}\\
%M^{1}\ M^{2}
%\end{array}\right),$ then $I(\cdot,z)$ can be written as
%\[
%I(v,z)=\sum_{n\in\mathbb{Z}}v(n)z^{-n-1}z^{-h_{1}-h_{2}+h_{3}}
%\]
 %such that for honogeneous $v\in M^{1},$ $v(n)M^{2}(m)\subset M^{3}\left(m+deg\, v-1-n\right),$
%where $deg\, v=k$ means $v\in M^{1}(k).$

%We will write $o(v)=v(deg\, v-1).$

%\end{remark}

\begin{definition} Let $V$ be a vertex operator algebra, and $W^{1},$
$W^{2}$ be two $V$-modules. A module $(W,I)$, where $I\in I_{V}\left(\begin{array}{c}
\ \ W\ \\
W^{1}\ \ W^{2}
\end{array}\right),$ is called a \emph{tensor product} (or fusion product) of $W^{1}$
and $W^{2}$ if for any $V$-module $M$ and $\mathcal{Y}\in I_{V}\left(\begin{array}{c}
\ \ M\ \\
W^{1}\ \ W^{2}
\end{array}\right),$ there is a unique $V$-module homomorphism $f:W\rightarrow M,$ such
that $\mathcal{Y}=f\circ I.$ As usual, we denote $(W,I)$ by $W^{1}\boxtimes_{V}W^{2}.$
\end{definition}

\begin{rmk}
It is well known that if $V$ is rational, then for any two irreducible
$V$-modules $W^{1}$ and $W^{2},$ the fusion product $W^{1}\boxtimes_{V}W^{2}$ exists and
$$
W^{1}\boxtimes_{V}W^{2}=\sum_{W}N_{W^{1},\ W^{2}}^{W}W,
$$
 where $W$ runs over the set of equivalence classes of irreducible
$V$-modules.
\end{rmk}

%It is well known that fusion rules have the following symmetric property
%\cite{FHL}.

%\begin{prop}\label{fusion rule symmmetry property} Let $W^{i}$ $\left(i=1,2,3\right)$
%be $V$-modules. Then

%\[
%N_{W^{1},W^{2}}^{W^{3}}=N_{W^{2},W^{1}}^{W^{3}},N_{W^{1},W^{2}}^{W^{3}}=N_{W^{1},(W^{3})'}^{(W^{2})'}.
%\]
%\end{prop}

Now we recall some notions about quantum dimensions.

\begin{definition}Let $M=\oplus_{n\in\mathbb{Z}_{+}}M_{\lambda+n}$
be a $V$-module, the \emph{formal character} of $M$
is defined as

\[
\mbox{ch}_{q}M=\mbox{tr}_{M}q^{L\left(0\right)-c/24}=q^{\lambda-c/24}\sum_{n\in\mathbb{Z}_{+}}\left(\dim M_{\lambda+n}\right)q^{n},
\]
where $c$ is the central charge of the vertex operator algebra $V$
and $\lambda$ is the conformal weight of $M$. \end{definition}

It is proved \cite{Z,DLM4} that $\mbox{ch}_{q}M$ converges to a
holomorphic function in the domain $|q|<1.$ We denote the holomorphic
function $\mbox{ch}_{q}M$ by $Z_{M}\left(\tau\right)$. Here and
below, $\tau$ is in the upper half plane $\mathbb{H}$ and $q=e^{2\pi i\tau}$.

Let $M^{0},\cdots,M^{d}$ be the inequivalent irreducible $V$-modules
with corresponding conformal weights $\lambda_{i}$ and $M^{0}\cong V$.
%Define
%\[
%Z_{i}\left(u,v,\tau\right)=\mbox{tr}_{M^{i}}e^{2\pi i\left(v\left(0\right)\right)+\left(v,u\right)\text{/2}}q^{L\left(0\right)+u\left(0\right)+\left(u,u\right)/2-c/24}
%\]
 %for $u,v\in V_{1}$. Notice that if $u,v=0,$ $Z_{i}(u,v,\tau)=Z_{i}(\tau).$
%Then we have the following theorem \cite{M,Z,DLM,DLM4}:

%\begin{thm} Let $V$ be a rational, $C_{2}$-cofinite vertex
%operator algebra of CFT type. Assume $u,v\in V_{1}$ such that $u,v$
%span an abelian Lie subalgebra of $V_{1}$. Let $\gamma=\left(\begin{array}{cc}
%a & b\\
%c & d
%\end{array}\right)\in SL\left(2,\mathbb{Z}\right).$ Then $Z_{i}\left(u,v,\tau\right)$ converges to a holomorphic function
%in the upper half plane and
%\[
%Z_{i}\left(u,v,\gamma\tau\right)=\sum_{j=0}^{d}\gamma_{i,j}Z_{j}\left(au+bv,cu+dv,\tau\right),
%\]
% where $\gamma\tau=\frac{a\tau+b}{c\tau+d}$ and $\gamma_{i,j}\in\mathbb{C}$
%are independent of the choice of $u,v$.  \end{thm}

%\begin{rmk}
%If $V_{1}=0$, then $u=v=0$. So
%$$
%Z_{i}\left(\gamma\tau\right)=\sum_{j=0}^{d}\gamma_{i,j}Z_{j}\left(\tau\right).
%$$
%\end{rmk}

From the Remark 2.13 of \cite{DJX}, we have

\begin{definition}
\[
Z_{M^{i}}\left(-\frac{1}{\tau}\right)=\sum_{j=0}^{d}S_{i,j}Z_{M^{j}}\left(\tau\right).
\]
The matrix $S=\left(S_{i,j}\right)_{i,j=0}^{d}$ is called an \emph{$S$-matrix}.
\end{definition}

% something about quantum diemensons @@@@@@@@@@@@@@@@@@@@@

The following definition of quantum dimension was introduced in \cite{DJX}.

\begin{definition} \label{quantum dimension}Let $V$ be a vertex
operator algebra and $M$ a $V$-module such that $Z_{V}\left(\tau\right)$
and $Z_{M}\left(\tau\right)$ exist. The quantum dimension of $M$
over $V$ is defined as
\[
\mbox{qdim}_{V}M=\lim_{y\to0}\frac{Z_{M}\left(iy\right)}{Z_{V}\left(iy\right)},
\]
 where $y$ is real and positive. \end{definition}
The following result was obtained in \cite[Lemma 4.2]{DJX}.

 \begin{lem}\label{quan dim Si0/S00}Let $V$ be a simple, rational
and $C_{2}$-cofinite vertex operator algebra of $CFT$ type with
$V\cong V'$. Let $M^{i}$  for $0\leq i\leq d$ be the inequivalent irreducible $V$-modules
with corresponding conformal weights $\lambda_{i}$ and $M^{0}\cong V$.
Assume $\lambda_{0}=0$ and $\lambda_{i}>0\ \forall i\not=0$.
Then $\mbox{qdim}_{V}M^{i}=\frac{S_{i,0}}{S_{0,0}}$.
\end{lem}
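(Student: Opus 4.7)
The plan is to rewrite the quantum dimension limit via the modular $S$-transformation, converting the regime $y\to 0^{+}$ at $\tau=iy$ into a $q$-expansion at $\tau=i/y\to i\infty$, where the leading behaviour of each character is controlled by its conformal weight. The rationality and $C_{2}$-cofiniteness hypotheses ensure, via Zhu's theorem and its extensions, that there are only finitely many irreducible modules and that their characters converge on the upper half plane and transform linearly under the matrix $(S_{i,j})$ given in the definition above.

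Setting $\tau=iy$ with $y>0$, one has $iy=-1/(i/y)$, so the $S$-transformation yields
\[
Z_{M^{i}}(iy)=\sum_{j=0}^{d}S_{i,j}\,Z_{M^{j}}(i/y),
\]
and similarly for $Z_{V}=Z_{M^{0}}$. As $y\to 0^{+}$ we have $i/y\to i\infty$, so $q'=e^{-2\pi/y}\to 0^{+}$, and each character has the expansion
\[
Z_{M^{j}}(i/y)=(q')^{\lambda_{j}-c/24}\bigl(\dim M^{j}_{\lambda_{j}}+O(q')\bigr).
\]
Dividing numerator and denominator of $Z_{M^{i}}(iy)/Z_{V}(iy)$ by $Z_{M^{0}}(i/y)=(q')^{-c/24}(1+O(q'))$ (using $\lambda_{0}=0$ and $\dim V_{0}=1$ from CFT type), the ratio $Z_{M^{j}}(i/y)/Z_{M^{0}}(i/y)$ behaves like $(\dim M^{j}_{\lambda_{j}})(q')^{\lambda_{j}}\to 0$ for $j\ne 0$ since $\lambda_{j}>0$, and equals $1+O(q')$ for $j=0$. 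Only the $j=0$ terms survive, giving $\mbox{qdim}_{V}M^{i}=S_{i,0}/S_{0,0}$.

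The main subtlety is the termwise passage to the limit in a finite sum of characters; this relies on the strict gap $\lambda_{j}>0$ for all $j\ne 0$ to separate the dominant $(q')^{-c/24}$ behaviour of $Z_{V}$ from the subleading contributions of the other modules, together with absolute convergence of the $q'$-expansion near $q'=0$. Both ingredients follow from the stated hypotheses, so no further structural input beyond the modular $S$-transformation and the CFT-type normalisation $\dim V_{0}=1$ is required.
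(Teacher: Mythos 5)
Your argument is, in substance, the proof of this lemma in the literature: the paper itself does not prove the statement but quotes it from \cite{DJX} (Lemma 4.2 there), and the proof in that source is exactly your computation --- apply the $S$-transformation to trade $\tau=iy$, $y\to 0^{+}$, for the cusp $i/y\to i\infty$, expand each character in $q'=e^{-2\pi/y}$, and use $\lambda_{0}=0<\lambda_{j}$ ($j\neq 0$) to see that in the two finite sums only the $j=0$ terms survive, giving the ratio $S_{i,0}/S_{0,0}$. The one point you pass over silently is the nonvanishing $S_{0,0}\neq 0$, which is needed to divide and to make the limit come out as claimed: it does not follow from the modular transformation property, the weight hypotheses, or the CFT-type normalisation $\dim V_{0}=1$ alone, and if $S_{0,0}=0$ the denominator $Z_{V}(iy)=\sum_{j}S_{0,j}Z_{M^{j}}(i/y)$ would be governed by subleading powers $(q')^{\lambda_{j}-c/24}$ with $\lambda_{j}>0$, so the limit need not equal $S_{i,0}/S_{0,0}$ (positivity of the characters only yields $S_{0,0}\geq 0$, not strict inequality). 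Under the stated hypotheses (simple, rational, $C_{2}$-cofinite, CFT type, $V\cong V'$) one in fact has $S_{0,0}>0$ by Huang's Verlinde formula and modular tensor category results, which is the structural input implicitly in force in \cite{DJX}; with that single citation added, your proof is complete and coincides with the source's.
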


From now on, we assume $V$ is a rational, $C_{2}$-cofinite vertex
operator algebra of CFT type with $V\cong V'$. Let $M^{0}\cong V,\, M^{1},\,\cdots,\, M^{d}$
denote all inequivalent irreducible $V$-modules. Moreover, we assume
the conformal weights $\lambda_{i}$ of $M^{i}$ are positive for
all $i>0.$ From Remark \ref{K0selfdual} and statement in Section 2, the parafermion vertex operator algebra $K_0$
satisfies all the assumptions.

The following result shows that the quantum dimensions are multiplicative under tensor product \cite{DJX} .

\begin{prop}\label{quantum-product} Let $V$ and $M_i$ for $0\leq i\leq d$ be as in Lemma \ref{quan dim Si0/S00}. Then
\[
\mbox{qdim}_{V}\left(M^{i}\boxtimes M^{j}\right)=\mbox{qdim}_{V}M^{i}\cdot \mbox{qdim}_{V}M^{j}
\]
for $i,\, j=0,\cdots,\, d.$
\end{prop}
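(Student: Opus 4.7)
The plan is to reduce the multiplicativity statement to the Verlinde formula by means of the explicit expression $\mbox{qdim}_{V}M^{i}=S_{i,0}/S_{0,0}$ supplied by Lemma \ref{quan dim Si0/S00}. Since $V$ is rational, the fusion product decomposes as a finite sum of irreducibles
\[
M^{i}\boxtimes M^{j}\cong\bigoplus_{k=0}^{d}N_{i,j}^{k}M^{k},
\]
and the quantum dimension is manifestly additive over direct sums (the character of a direct sum is the sum of characters, and $\mbox{qdim}_V$ is defined as a limit of a linear functional of the character). Thus the identity to be proved reduces to
\[
\sum_{k=0}^{d}N_{i,j}^{k}S_{k,0}=\frac{S_{i,0}S_{j,0}}{S_{0,0}}.
\]

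To establish this numerical identity I would appeal to the Verlinde formula
\[
N_{i,j}^{k}=\sum_{r=0}^{d}\frac{S_{i,r}S_{j,r}(S^{-1})_{r,k}}{S_{0,r}},
\]
which holds in the present setting (rational, $C_{2}$-cofinite, CFT type, with $V\cong V'$) by Huang's theorem. Substituting into the left-hand side and interchanging the order of summation collapses the sum over $k$ via the orthogonality relation $\sum_{k}(S^{-1})_{r,k}S_{k,0}=\delta_{r,0}$, leaving only the $r=0$ term. This yields $S_{i,0}S_{j,0}/S_{0,0}$, and dividing through by $S_{0,0}$ gives exactly $\mbox{qdim}_{V}M^{i}\cdot\mbox{qdim}_{V}M^{j}$.

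The main obstacle is really just the Verlinde formula itself: once one has it, the remainder is routine linear algebra. Everything else in the chain—additivity of quantum dimension, existence of the fusion product, and the closed-form expression $S_{i,0}/S_{0,0}$ for $\mbox{qdim}_{V}M^{i}$—is either immediate from the definitions or already quoted in this section. So in the written proof I would cite the Verlinde formula explicitly, execute the two-line substitution and orthogonality computation, and conclude.
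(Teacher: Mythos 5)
Your proof is correct, and it is essentially the paper's approach: the paper offers no proof of this proposition at all, quoting it directly from \cite{DJX}, and the argument given there is exactly yours --- additivity of quantum dimensions over the fusion decomposition, the formula $\mbox{qdim}_{V}M^{i}=S_{i,0}/S_{0,0}$ from Lemma \ref{quan dim Si0/S00}, and Huang's Verlinde formula collapsed via the orthogonality $\sum_{k}(S^{-1})_{r,k}S_{k,0}=\delta_{r,0}$. You also correctly note that the standing hypotheses (simple, rational, $C_{2}$-cofinite, CFT type, $V\cong V'$) are precisely what Huang's theorem requires, so the appeal to the Verlinde formula is legitimate in this setting.
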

 Before giving the main result of this section, we recall the following character of irreducible $K_0$-modules $M^{i,j}$ which is given in \cite{CGT,GQ}:
 \begin{equation*}
\mbox{ch} M^{i,j} = \eta(\tau) c^i_{i-2j}(\tau)
\end{equation*}
by \cite[(3.34)]{GQ}. Note that $k$, $l$ and $m$ in \cite{GQ} are $k$, $i$
and $i-2j$, respectively in our notation.

\begin{thm}\label{quantum-dimension} The quantum dimensions for all irreducible $K_0$-modules $M^{m,n}$ are
$$\mbox{qdim}_{K_{0}}M^{m,n}=\frac{\sin\frac{\pi(m+1)}{k+2}}{\sin\frac{\pi}{k+2}}$$
for $0\leq m\leq k,\, 0\leq n \leq k-1,$ where $M^{m,n}$ are the irreducible modules of $K_0$ constructed in \cite{DLY2}.
\end{thm}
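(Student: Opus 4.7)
The plan is to apply Lemma \ref{quan dim Si0/S00} to reduce $\mbox{qdim}_{K_0} M^{m,n}$ to the asymptotic character ratio $\lim_{y\to 0^+} Z_{M^{m,n}}(iy)/Z_{K_0}(iy)$, and then to evaluate this limit by invoking the Kac--Peterson modular $S$-transformation of the level-$k$ $\widehat{sl}_2$ string functions.

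First, the hypotheses of Lemma \ref{quan dim Si0/S00} hold for $K_0$ by the material of Section~2 together with Remark \ref{K0selfdual}: $K_0$ is simple, rational, $C_2$-cofinite, of CFT type, self-dual, and every nontrivial irreducible $M^{m,n}$ has strictly positive conformal weight. The character formula $\mbox{ch}\,M^{m,n}=\eta(\tau)c^m_{m-2n}(\tau)$ recalled just before the theorem gives
\[
\frac{Z_{M^{m,n}}(\tau)}{Z_{K_0}(\tau)} = \frac{c^m_{m-2n}(\tau)}{c^0_0(\tau)},
\]
so it suffices to compute the limit of this string-function ratio as $\tau=iy\to 0^+$.

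Second, apply the Kac--Peterson modular $S$-transformation (cf.\ \cite[Ch.~13]{Kac}),
\[
c^\ell_p(-1/\tau) = \frac{1}{\sqrt{-i\tau}\,\sqrt{k(k+2)}}\sum_{(\ell',p')} \sin\frac{\pi(\ell+1)(\ell'+1)}{k+2}\,e^{\pi i p p'/k}\,c^{\ell'}_{p'}(\tau),
\]
summed over a set of representatives for the independent level-$k$ string functions. Setting $\tau=iy$ and letting $y\to 0^+$, the common prefactor $y^{-1/2}(k(k+2))^{-1/2}$ cancels in the ratio. Each $c^{\ell'}_{p'}(i/y)$ behaves asymptotically like $q^{\tilde h^{\ell'}_{p'}}$ with $\tilde h^{\ell'}_{p'}=\frac{\ell'(\ell'+2)}{4(k+2)}-\frac{(p')^2}{4k}-\frac{c}{24}$ and $c=\frac{3k}{k+2}$; on a fundamental domain one has $\tilde h^{\ell'}_{p'}+\frac{c}{24}\ge 0$ with equality only at the vacuum $(\ell',p')=(0,0)$. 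Hence $c^0_0(i/y)$ dominates exponentially in both numerator and denominator. Extracting the ratio of its coefficients (and noting that the phase $e^{\pi i(m-2n)\cdot 0/k}=1$) yields
\[
\lim_{y\to 0^+} \frac{c^m_{m-2n}(iy)}{c^0_0(iy)} = \frac{\sin\frac{\pi(m+1)}{k+2}}{\sin\frac{\pi}{k+2}},
\]
which is the asserted formula. Note that the answer depends only on $m$, consistent with the existence of symmetries among the $M^{m,n}$ at fixed $m$.

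The main technical delicacy is a careful treatment of the string-function symmetries $c^\ell_p=c^\ell_{-p}=c^{k-\ell}_{k-p}$, which cause the naive sum over $(\ell',p')$ to overcount independent functions. This is resolved either by summing over a strict fundamental domain---in which the vacuum $c^0_0$ is the unique term of minimal exponent---or by checking that the coefficients attached to symmetry-equivalent indices in the transformation formula agree, using $\sin\frac{\pi(k-\ell+1)(\ell'+1)}{k+2}=\sin\frac{\pi(\ell+1)(\ell'+1)}{k+2}$ together with matching phase identities. One must also confirm the inequality $\tilde h^\ell_p+\frac{c}{24}>0$ for all $(\ell,p)\ne(0,0)$ on the fundamental domain, which is a routine computation from the explicit formula for $\tilde h^\ell_p$.
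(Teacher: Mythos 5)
Your proposal is correct and takes essentially the same route as the paper: both rest on the character formula $\mbox{ch}\,M^{m,n}=\eta(\tau)c^m_{m-2n}(\tau)$, the Gepner--Qiu/Kac--Peterson modular $S$-transformation, and the verification that the vacuum is the unique irreducible of conformal weight zero (in the paper's labeling, $\lambda_{k,0}=0$ and $\lambda_{m,n}>0$ otherwise). The one discrepancy is in how you invoke Lemma \ref{quan dim Si0/S00}: that lemma already yields $\mbox{qdim}_{K_0}M^{m,n}=S_{m,n}^{0,0}/S_{0,0}^{0,0}$ outright (the reduction to $\lim_{y\to 0}Z_{M}(iy)/Z_{K_0}(iy)$ is just Definition \ref{quantum dimension}), so your explicit $y\to 0$ asymptotic analysis of the string functions, including the fundamental-domain and $c^\ell_p=c^{k-\ell}_{k-p}$ symmetry bookkeeping, is a sound but unnecessary re-derivation of that lemma's conclusion, which the paper simply cites after checking its hypotheses.
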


\begin{proof} Let $M^{m,n}(\tau)$ denote the character of $M^{m,n}$ for $0\leq m\leq k,\ 0\leq n\leq k-1$. The $S$-modular transformation of characters has the following form \cite{GQ}, \cite{Kac}:
$$M^{m,n}(-\frac{1}{\tau})=\sum\limits_{m^{'},n^{'}}S_{m,n}^{m^{'},n^{'}}M^{m^{'},n^{'}}(\tau),$$
where $S_{m,n}^{m^{'},n^{'}}=(k(k+2))^{-\frac{1}{2}}\exp\frac{i\pi(m-2n)(m^{'}-2n^{'})}{k}\sin\frac{\pi(m+1)(m^{'}+1)}{k+2}$. From \cite{DLY2} and \cite{ALY}, we see that $K_0$ has $\frac{k(k+2)}{2}$ irreducible modules $M^{m,n}$  with the conformal weights $$\lambda_{m,n}=\frac{1}{2k(k+2)}(k(m-2n)-(m-2n)^{2}+2kn(m-n+1))$$
for $0\leq m\leq k,\ 0\leq n\leq m-1$. It is easy to check that $\lambda_{k,0}=0$ and $\lambda_{m,n}>0$ for $(m,n)\neq (k,0).$ Thus by using Lemma \ref{quan dim Si0/S00}, we have $$\mbox{qdim}_{K_{0}}M^{m,n}=\frac{S_{m,n}^{0,0}}{S_{0,0}^{0,0}}=\frac{\sin\frac{\pi(m+1)}{k+2}}{\sin\frac{\pi}{k+2}}.$$

\end{proof}

\section{Fusion rule for irreducible $K_0$-modules
}\label{Sect: fusion product}\def\theequation{4.\arabic{equation}}
\setcounter{equation}{0}

In this section, we give the fusion rules for irreducible $K_0$-modules.
 First we fix some notations. Let $W^{1},W^{2},W^{3}$ be irreducible $K_0$-modules. In the following, we use $I\left(\begin{array}{c}W^3\\
W^{1}\,W^{2}\end{array}\right)$ to denote the space  $I_{K_0}\left(\begin{array}{c}W^3\\
W^{1}\,W^{2}\end{array}\right)$ of all intertwining operators
of type $\left(\begin{array}{c}W^3\\
W^{1}\,W^{2}\end{array}\right)$, and use $W^{1}\boxtimes W^{2}$ to denote the fusion product $W^{1}\boxtimes_{K_0}W^{2}$
for simplicity.

We recall the fusion rules for affine vertex operator algebra of type $A_1^{(1)}$ \cite{TK} for later use.

\begin{lem}\label{affine} $$L(k,i)\boxtimes_{L(k,0)} L(k,j)=\sum\limits_{c} L(k,c),$$
where $|i-j|\leq c\leq i+j, \ i+j+c\in 2\mathbb{Z},\ i+j+c\leq 2k.$\\
\end{lem}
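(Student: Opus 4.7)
The plan is to establish these fusion rules via the Verlinde formula, which applies because $L(k,0)$ is known to be rational, $C_2$-cofinite, of CFT type and self-dual, so its module category is a modular tensor category. First I would write down the modular $S$-matrix acting on the characters of the $k+1$ irreducible $L(k,0)$-modules $L(k,i)$, namely
$$S_{ij}=\sqrt{\tfrac{2}{k+2}}\,\sin\tfrac{\pi(i+1)(j+1)}{k+2},\qquad 0\le i,j\le k,$$
which is standard from the affine Weyl--Kac character formula and already appears in the passage to the $K_0$ characters in Section 3.

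Next I would insert this into the Verlinde formula
$$N_{ij}^{c}=\sum_{s=0}^{k}\frac{S_{is}S_{js}\overline{S_{cs}}}{S_{0s}}=\frac{2}{k+2}\sum_{s=0}^{k}\frac{\sin\tfrac{\pi(i+1)(s+1)}{k+2}\sin\tfrac{\pi(j+1)(s+1)}{k+2}\sin\tfrac{\pi(c+1)(s+1)}{k+2}}{\sin\tfrac{\pi(s+1)}{k+2}}.$$
Since each $L(k,i)$ is self-dual at level $k$, charge conjugation is trivial and these coefficients $N_{ij}^{c}$ are precisely the multiplicities of $L(k,c)$ in $L(k,i)\boxtimes_{L(k,0)}L(k,j)$.

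The main work is then the trigonometric identity: applying $2\sin\alpha\sin\beta=\cos(\alpha-\beta)-\cos(\alpha+\beta)$ twice reduces the summand to a combination of ratios $\frac{\cos A\theta-\cos B\theta}{\sin\theta}$ with $\theta=\pi(s+1)/(k+2)$, after which summing over $s\in\{0,\dots,k\}$ becomes a geometric series on the $2(k+2)$-th roots of unity and can be evaluated in closed form. Carefully tracking which cosine terms survive shows that the sum equals $1$ exactly when $|i-j|\le c\le i+j$, $i+j+c\in 2\Z$ and $i+j+c\le 2k$, and equals $0$ otherwise.

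The hard part will be the boundary regime $i+j>k$: the terms whose ``classical'' contribution would place $c$ in the forbidden range $i+j\ge c>2k-i-j$ must cancel exactly, and this is where the quantum truncation distinguishes the affine fusion from the ordinary $sl_2$ Clebsch--Gordan rule. A more conceptual alternative would be the Tsuchiya--Kanie \cite{TK} construction of intertwining operators via the KZ connection on $\mathbb P^1$ with four marked points, identifying the fusion coefficient with the dimension of conformal blocks; this amounts to the same statement and is what the authors here simply cite.
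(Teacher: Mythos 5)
The paper offers no proof of this lemma at all: it is stated as a known result with a bare citation to \cite{TK}, where the $\widehat{sl}_2$ level-$k$ fusion rules were derived by constructing the relevant vertex (intertwining) operators and analyzing the Knizhnik--Zamolodchikov equation --- exactly the ``conceptual alternative'' you name in your last sentence, so you correctly anticipated the paper's treatment. Your Verlinde-formula route is a genuinely different, standard, and valid derivation: the $S$-matrix you write is the correct one, each $L(k,i)$ is indeed self-dual so charge conjugation is trivial, and the trigonometric evaluation you sketch (expanding $\sin((c+1)\theta)/\sin\theta$ as a sum of exponentials in the $2(k+2)$-th roots of unity and tracking the cancellations) does close in closed form, producing $N_{ij}^c=1$ precisely when $|i-j|\le c\le \min(i+j,\,2k-i-j)$ and $i+j+c\in 2\mathbb{Z}$. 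Two caveats keep this from being a finished proof as written. First, the inference from ``the module category of $L(k,0)$ is a modular tensor category'' to ``fusion coefficients are given by $\sum_s S_{is}S_{js}\overline{S_{cs}}/S_{0s}$ with $S$ the \emph{character}-transformation matrix'' is precisely Huang's Verlinde theorem for simple, rational, $C_2$-cofinite, self-dual VOAs of CFT type; it is citable and applies here, but it is a far heavier and much later input than \cite{TK}, so your route is logically more expensive than the citation the paper leans on (though not circular within this paper, since the quantum-dimension results from \cite{DJX} used elsewhere already rest on the same foundations). Second, the step you yourself flag as ``the hard part'' --- the exact cancellation in the regime $i+j>k$ that truncates the classical Clebsch--Gordan range at $c\le 2k-i-j$ --- is asserted rather than carried out; it is an elementary finite computation, but it is where the content of the statement actually lives, so a complete write-up would need to display it.
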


\begin{thm}\label{para-fusion} The fusion rule for the irreducible modules of Parafermion vertex operator algebra $K_0$ is as follows:
\begin{eqnarray}
M^{i,i^{'}}\boxtimes M^{j,j^{'}}=\sum\limits_{c}  M^{c,\overline{\frac{1}{2}(2i^{'}-i+2j^{'}-j+c)}},\label{eq:4.1}
\end{eqnarray}
where $\overline{a}$ means the residue of the integer $a$ modulo $k$, $0\leq i,j\leq k, 0 \leq i^{'},j^{'}\leq k-1,$ $|i-j|\leq c\leq i+j, \ i+j+c\in 2\mathbb{Z},\ i+j+c\leq 2k.$ Moreover, with fixed $i,i^{'},j,j^{'}$, $M^{c,\overline{\frac{1}{2}(2i^{'}-i+2j^{'}-j+c)}}$ for $|i-j|\leq c\leq i+j, \ i+j+c\in 2\mathbb{Z},\ i+j+c\leq 2k$ are inequivalent irreducible modules.
\end{thm}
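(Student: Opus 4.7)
The plan is to deduce the $K_0$-fusion rules by combining four ingredients: the $\widehat{sl}_2$-fusion rule of Lemma \ref{affine}, the $V_{\Z\gm}\otimes K_0$-branching of Lemma \ref{lem:dec}, the elementary fusion among irreducible $V_{\Z\gm}$-modules (governed by additivity of the coset index modulo $\Z\gm$), and the multiplicativity of quantum dimensions from Proposition \ref{quantum-product} together with the explicit formula of Theorem \ref{quantum-dimension}. The strategy has two phases: first obtain lower bounds on the multiplicities of the claimed summands by branching through the $L(k,0)$-fusion, then use a quantum-dimension count to upgrade these to equalities and exclude any extra summand.

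For the branching step, apply Lemma \ref{lem:dec} to both sides of $L(k,i)\boxtimes_{L(k,0)} L(k,j)=\bigoplus_c L(k,c)$, viewed as an identity of $V_{\Z\gm}\otimes K_0$-modules. Restricting a nonzero $L(k,0)$-intertwining operator of type $\binom{L(k,c)}{L(k,i)\;L(k,j)}$ gives a $V_{\Z\gm}\otimes K_0$-intertwining operator, and for a tensor-product vertex operator algebra the intertwining space factorizes as a tensor product of intertwining spaces for the factors. Since only the obvious fusion among irreducible $V_{\Z\gm}$-modules is nonzero, a nonzero projection onto the summand $V_{\Z\gm+(c-2c')\gm/2k}\otimes M^{c,c'}$ requires $c-2c'\equiv (i-2i')+(j-2j')\pmod{2k}$; combined with the parity condition $c\equiv i+j\pmod 2$ from Lemma \ref{affine}, this uniquely determines $c'\equiv \overline{\tfrac12(2i'-i+2j'-j+c)}\pmod k$. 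Hence for every admissible $c$ the module $M^{c,\overline{(2i'-i+2j'-j+c)/2}}$ occurs in $M^{i,i'}\boxtimes M^{j,j'}$ with multiplicity at least one.

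To verify that each multiplicity is exactly one and no other summands appear, combine Proposition \ref{quantum-product} with Theorem \ref{quantum-dimension}:
\[ \mbox{qdim}_{K_0}M^{i,i'}\cdot\mbox{qdim}_{K_0}M^{j,j'}=\frac{\sin\tfrac{\pi(i+1)}{k+2}\,\sin\tfrac{\pi(j+1)}{k+2}}{\sin^2\tfrac{\pi}{k+2}}. \]
Applying Proposition \ref{quantum-product} instead to the $L(k,0)$-fusion $L(k,i)\boxtimes L(k,j)=\sum_c L(k,c)$, and recognising that the $L(k,0)$-quantum dimensions are given by the same sine expression, rewrites the right side as $\sum_c \mbox{qdim}_{K_0}M^{c,n_c}$, where $n_c=\overline{(2i'-i+2j'-j+c)/2}$. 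Since quantum dimensions are additive over direct sums and each $M^{c,n_c}$ is already known to contribute multiplicity at least one, the matching of totals forces each multiplicity to be exactly one and rules out any further irreducible summand.

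For the pairwise inequivalence assertion, by \cite[Theorem 4.4]{DLY2} one has $M^{a,b}\cong M^{k-a,\overline{k-a+b}}$, so two summands $M^{c_l,n_{c_l}}$ with $c_1\ne c_2$ can be isomorphic only if $c_1+c_2=k$; direct substitution into the formula for $n_{c_l}$, using the common parity $c\equiv i+j\pmod 2$, shows the second indices then differ by $k/2$ modulo $k$, so the summands remain inequivalent. The principal technical obstacle is the branching step: one must carefully justify that the restriction of $L(k,0)$-intertwiners to $V_{\Z\gm}\otimes K_0$-intertwiners, combined with the factorisation of intertwining spaces for tensor-product VOAs, yields a nontrivial $K_0$-intertwiner of type $\binom{M^{c,n_c}}{M^{i,i'}\;M^{j,j'}}$ for every admissible $c$; once that is in place, the rest reduces to a quantum-dimension count and a short parity check.
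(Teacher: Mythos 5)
Your proposal is correct and follows essentially the same route as the paper: lower bounds on the multiplicities via Lemma \ref{lem:dec} together with the restriction and tensor-factorization results of \cite{ADL} and the lattice fusion rules (forcing $c'\equiv\frac12(2i'-i+2j'-j+c)\bmod k$), an upper bound by matching quantum dimensions via Proposition \ref{quantum-product} and Theorem \ref{quantum-dimension}, and the same $c'=k-c$ parity computation (a discrepancy of $k/2\bmod k$ in the second index) for the inequivalence of the summands. The only deviation is local and harmless: where the paper verifies the needed sine identity directly through the telescoping cosine identity (\ref{eq:4.2}), you derive it by applying quantum-dimension multiplicativity to the affine fusion $L(k,i)\boxtimes_{L(k,0)}L(k,j)$ together with the standard fact $\mathrm{qdim}_{L(k,0)}L(k,i)=\sin\frac{\pi(i+1)}{k+2}\big/\sin\frac{\pi}{k+2}$ (which follows from Lemma \ref{quan dim Si0/S00} and the affine $S$-matrix), an equivalent and slightly slicker justification of the same numerical input.
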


\begin{proof} We take $V=L(k,0), U=V_{\mathbb{Z}\gamma}\otimes K_0$ in Proposition 2.9 of \cite{ADL}, from the Lemma \ref{lem:dec}, we see that
$$\mbox{dim}I_{V}\left(\begin{array}{c}L(k,l)\\
L(k,i)\,L(k,j)\end{array}\right)\leq \mbox{dim}I_{U}\left(\begin{array}{c}L(k,l)\\
V_{\mathbb{Z}\gamma+\frac{(i-2i^{'})\gamma}{2k}}\otimes M^{i,i^{'}}\,V_{\mathbb{Z}\gamma+\frac{(j-2j^{'})\gamma}{2k}}\otimes M^{j,j^{'}}\end{array}\right)$$
for $0\leq i,j,l\leq k, 0\leq i^{'},j^{'},l^{'}\leq k-1.$ Note that $L(k,l) = \oplus_{j=0}^{k-1} V_{\Z\gm + (l-2j)\gm/2k} \otimes M^{l,j}$, thus we have
$$\mbox{dim}I_{U}\left(\begin{array}{c}L(k,l)\\
V_{\mathbb{Z}\gamma+\frac{(i-2i^{'})\gamma}{2k}}\otimes M^{i,i^{'}}\,V_{\mathbb{Z}\gamma+\frac{(j-2j^{'})\gamma}{2k}}\otimes M^{j,j^{'}}\end{array}\right)$$
$$=\sum_{l^{'}=0}^{k-1} \mbox{dim}I_{U}\left(\begin{array}{c}V_{\mathbb{Z}\gamma+\frac{(l-2l^{'})\gamma}{2k}}\otimes M^{l,l^{'}}\\
V_{\mathbb{Z}\gamma+\frac{(i-2i^{'})\gamma}{2k}}\otimes M^{i,i^{'}}\,V_{\mathbb{Z}\gamma+\frac{(j-2j^{'})\gamma}{2k}}\otimes M^{j,j^{'}}\end{array}\right).$$
% Thus
%$$\mbox{dim}I_{V}\left(\begin{array}{c}L(k,l)\\
%L(k,i)\,L(k,j)\end{array}\right)\leq \mbox{dim}I_{U}\left(\begin{array}{c}V_{\mathbb{Z}\gamma+\frac{(l-2l^{'})\gamma}{2k}}\otimes M^{l,l^{'}}\\
%V_{\mathbb{Z}\gamma+\frac{(i-2i^{'})\gamma}{2k}}\otimes M^{i,i^{'}}\,V_{\mathbb{Z}\gamma+\frac{(j-2j^{'})\gamma}{2k}}\otimes M^{j,j^{'}}\end{array}\right).$$

Then by using Theorem 2.10 of \cite{ADL}, we have
$$\mbox{dim}I_{U}\left(\begin{array}{c}V_{\mathbb{Z}\gamma+\frac{(l-2l^{'})\gamma}{2k}}\otimes M^{l,l^{'}}\\
V_{\mathbb{Z}\gamma+\frac{(i-2i^{'})\gamma}{2k}}\otimes M^{i,i^{'}}\,V_{\mathbb{Z}\gamma+\frac{(j-2j^{'})\gamma}{2k}}\otimes M^{j,j^{'}}\end{array}\right)$$
$$=\mbox{dim}I_{V_{\mathbb{Z}\gamma}}\left(\begin{array}{c}V_{\mathbb{Z}\gamma+\frac{(l-2l^{'})\gamma}{2k}}\\
V_{\mathbb{Z}\gamma+\frac{(i-2i^{'})\gamma}{2k}}\,V_{\mathbb{Z}\gamma+\frac{(j-2j^{'})\gamma}{2k}}\end{array}\right)\cdot \mbox{dim}I_{K_0}\left(\begin{array}{c} M^{l,l^{'}}\\
M^{i,i^{'}}\,M^{j,j^{'}}\end{array}\right).$$
Recall that the fusion rule for lattice vertex operator algebras is:
$$V_{\mathbb{Z}\gamma+\lambda}\boxtimes_{V_{\mathbb{Z}\gamma}} V_{\mathbb{Z}\gamma+\mu}=V_{\mathbb{Z}\gamma+\lambda+\mu}$$
for $\lambda, \mu\in(\mathbb{Z}\gamma)^{\circ}$, where $(\mathbb{Z}\gamma)^{\circ}$ is the dual lattice of $\mathbb{Z}\gamma$, this together with the fusion rule for affine vertex operator algebra given in Lemma \ref{affine}, we see that if $l^{'}\neq \frac{l-i+2i^{'}-j+2j^{'}}{2}$, then
$$\mbox{dim}I_{U}\left(\begin{array}{c}V_{\mathbb{Z}\gamma+\frac{(l-2l^{'})\gamma}{2k}}\otimes M^{l,l^{'}}\\
V_{\mathbb{Z}\gamma+\frac{(i-2i^{'})\gamma}{2k}}\otimes M^{i,i^{'}}\,V_{\mathbb{Z}\gamma+\frac{(j-2j^{'})\gamma}{2k}}\otimes M^{j,j^{'}}\end{array}\right)$$
$$=\mbox{dim}I_{V_{\mathbb{Z}\gamma}}\left(\begin{array}{c}V_{\mathbb{Z}\gamma+\frac{(l-2l^{'})\gamma}{2k}}\\
V_{\mathbb{Z}\gamma+\frac{(i-2i^{'})\gamma}{2k}}\,V_{\mathbb{Z}\gamma+\frac{(j-2j^{'})\gamma}{2k}}\end{array}\right)\cdot \mbox{dim}I_{K_0}\left(\begin{array}{c} M^{l,l^{'}}\\
M^{i,i^{'}}\,M^{j,j^{'}}\end{array}\right)=0.$$
And we also have $$\mbox{dim}I_{K_0}\left(\begin{array}{c} M^{c,\overline{\frac{1}{2}(2i^{'}-i+2j^{'}-j+c)}}\\
M^{i,i^{'}}\,M^{j,j^{'}}\end{array}\right)\geq 1.$$ In the following, we use the quantum dimension of the irreducible modules over $K_0$ to prove that $$\mbox{dim}I_{K_0}\left(\begin{array}{c} M^{c,\overline{\frac{1}{2}(2i^{'}-i+2j^{'}-j+c)}}\\
M^{i,i^{'}}\,M^{j,j^{'}}\end{array}\right)=1.$$
Since from Proposition \ref{quantum-product}, we know \[
\mbox{qdim}_{K_0}\left(M^{i,i^{'}}\boxtimes M^{j,j^{'}}\right)=\mbox{qdim}_{K_0}M^{i,i^{'}}\cdot \mbox{qdim}_{K_0}M^{j,j^{'}},
\]  and we also have

\[
M^{i,i^{'}}\boxtimes M^{j,j^{'}}=\sum_{(0,0)\leq(l,l^{'})\leq(k,l-1)}N_{(i,i^{'}),(j,j^{'})}^{(l,l^{'})}M^{l,l^{'}}.
\] Since Theorem \ref{quantum-dimension} shows that $$\mbox{qdim}_{K_{0}}M^{i,i^{'}}=\frac{\sin\frac{\pi(i+1)}{k+2}}{\sin\frac{\pi}{k+2}}.$$
Thus we only need to prove the following identity holds:
$$\frac{\sin\frac{\pi(i+1)}{k+2}\cdot\sin\frac{\pi(j+1)}{k+2}}{\sin\frac{\pi}{k+2}\cdot{\sin\frac{\pi}{k+2}}}
=\sum\limits_{c}\frac{\sin\frac{\pi(c+1)}{k+2}}{\sin\frac{\pi}{k+2}}$$
for $0\leq i,j\leq k, 0 \leq i^{'},j^{'}\leq k-1$, $|i-j|\leq c\leq i+j, \ i+j+c\in 2\mathbb{Z},\ i+j+c\leq 2k.$  This identity is equivalent to
the following identity:
\begin{eqnarray}\cos\frac{\pi(i+j+2)}{k+2}-\cos\frac{\pi(i-j)}{k+2}=\sum\limits_{c}(\cos\frac{\pi(c+2)}{k+2}-\cos\frac{\pi c}{k+2}).\label{eq:4.2}
\end{eqnarray}
We note that if $i+j<k$, then $c_{\mbox{min}}=i-j$, $c_{\mbox{max}}=i+j$, thus (\ref{eq:4.2}) holds.
If $i+j>k$, $c_{\mbox{min}}=i-j$, $c_{\mbox{max}}=i+j-2n$ for some $n$ satisfying that $i+j-2n+i+j=2k$, that is, $c_{\mbox{max}}=2k-i-j$, thus (\ref{eq:4.2}) also holds.

Now we prove that the modules $M^{c,\overline{\frac{1}{2}(2i^{'}-i+2j^{'}-j+c)}}$ appeared in the sum (\ref{eq:4.1}) are not isomorphic to each other. If  $M^{c,\overline{\frac{1}{2}(2i^{'}-i+2j^{'}-j+c)}}\cong M^{c^{'},\overline{\frac{1}{2}(2i^{'}-i+2j^{'}-j+c^{'})}}$ for some $c, c^{'}$ satisfying $|i-j|\leq c\leq i+j, \ i+j+c\in 2\mathbb{Z},\ i+j+c\leq 2k$. Since we know that $M^{i,j}\cong M^{k-i,k-i+j}$ as $K_0$-module for $0\leq i\leq k, 0\leq j\leq k-1$ and the $\frac{k(k+1)}{2}$ irreducible $K_0$-modules $M^{i,j}$ for $0\leq i\leq k, 0\leq j\leq i-1$ exhaust all the isomorphism classes of irreducible $K_0$-modules, it follows that $c^{'}=k-c$ and $$\overline{k-c+\frac{1}{2}(2i^{'}-i+2j^{'}-j+k-c)}=\overline{\frac{1}{2}(2i^{'}-i+2j^{'}-j+k-c)},$$ it is impossible by a direct calculation. This proves the assertion.

\end{proof}


\begin{thebibliography}{99}

\bibitem{ADL} T. Abe, C. Dong and H. Li, Fusion rules for the
vertex operator $M(1)^{+}$ and $V_{L}^{+}$, \emph{Comm. Math. Phys.}\emph{.}
\textbf{253} (2005) 171-219.

\bibitem{A} T. Arakawa, Representation theory of $W$-algebras, \emph{Invent. Math.}
\textbf{169} (2007), 219--320.

\bibitem{ALY} T. Arakawa, C.H. Lam and H. Yamada, Zhu's algebra, $C_2$-cofiniteness of parafermion
vertex operator algebras, \emph{Adv. Math.}
\textbf{264} (2014), 261--295.

\bibitem{CGT}
A. Cappelli, L. S. Georgiev and I. T. Todorov, Parafermion Hall states from
coset projections of abelian conformal theories, \emph{Nucl. Phys.}
\textbf{B599} (2001), 499--530.

\bibitem{Dong}
C. Dong, Vertex algebras associated with even lattices, \emph{J. Algebra}
\textbf{160} (1993), 245--265.

\bibitem{DJX} C. Dong, X. Jiao and F. Xu, Quantum dimensions
and Quantum Galois theory, {\em Trans. AMS.} {\bf 365} (2013), 6441-6469.

\bibitem{DLM1} C. Dong, H. Li and G. Mason, Simple currents
and extensions of vertex operator algebras. {\em Comm. Math. Phys.} \textbf{180}
(1996) 671-707.

\bibitem{DLM2} C. Dong, H. Li and G. Mason, Regularity of rational
vertex operator algebras, {\em Adv. Math.} \textbf{132} (1997) 148-166.

\bibitem{DLM3} C. Dong, H. Li and G. Mason, Twisted representations
of vertex operator algebras, {\em Math. Ann.} \textbf{310} (1998) 571-600.

\bibitem{DLM4} C. Dong, H. Li and G. Mason, Modular-Invariance
of Trace Functions in Orbifold Theory and Generalized Moonshine, {\em Comm.
Math. Phys.} \textbf{214} (2000) 1-56.

\bibitem{DLY2} C. Dong, C.H. Lam and H. Yamada, W-algebras related to
parafermion algebras, \emph{J. Algebra} {\bf 322} (2009), 2366-2403.

\bibitem{DLWY} C. Dong, C.H. Lam, Q. Wang and H. Yamada, The
structure of parafermion vertex operator algebras,
 \emph{J. Algebra} {\bf 323} (2010), 371-381.

 \bibitem{DR} C. Dong and L. Ren, Representations of the parafermion vertex operator algebras, arxiv:1411.6085.

\bibitem{DW1} C. Dong and Q. Wang, The structure of parafermion vertex operator algebras: general case, {\em Comm. Math. Phys.}
 {\bf 299} (2010), 783-792.

\bibitem{DW2} C. Dong and Q. Wang, On $C_2$-cofiniteness of parafermion vertex operator algebras, {\em J. Algebra}
 {\bf 328} (2011), 420-431.

 \bibitem{FHL} I. B. Frenkel, Y. Huang and J. Lepowsky, On axiomatic
approaches to vertex operator algebras and modules, {\em Memoirs American
Math. Soc.} \textbf{104}, 1993.

\bibitem{FKW}E. Frenkel, V. Kac and M. Wakimoto, Characters and fusion rules for $W$-algebras via quantized Drinfeld-Sokolov reduction, {\em Comm. Math. Phys.}
 {\bf 147} (1992), 295-328.

 \bibitem{FLM} I. B. Frenkel, J. Lepowsky and A. Meurman, Vertex
operator algebras and the monster, {\em Pure and Applied Math.} Vol. 134,
Academic Press, Massachusetts, 1988.

\bibitem{GKO}
P. Goddard, A. Kent and D. Olive, Unitary representations of the Virasoro and super Virasoro algebras, \emph{Comm. Math. Phys.} \textbf{103} (1986),
105--119.

\bibitem{GQ}
D. Gepner and Z. Qiu, Modular invariant partition functions for
parafermionic field theories, \emph{Nucl. Phys.} \textbf{B285} (1987),
423--453.

\bibitem{Kac}
V. G. Kac, \emph{Infinite-dimensional Lie Algebras}, 3rd ed., Cambridge
University Press, Cambridge, 1990.

\bibitem{L3} H. Li, Symmetric invariant bilinear forms on vertex operator algebras, {\em J. pure Appl. Alg.} \textbf{96} (1994) 279-297.

\bibitem{LL}
J. Lepowsky and H. Li, \emph{Introduction to Vertex Operator Algebras
and Their Representations}, Progress in Math., Vol. 227, Birkh\"auser,
Boston, 2004.

\bibitem{Z} Y. Zhu, Modular invariance of characters of vertex
operator algebras, {\em J. Amer. Math. Soc.} \textbf{9 }(1996) 237-302.




\end{thebibliography}
\end{document}